\documentclass[a4paper,12pt]{article} 
\usepackage[utf8]{inputenc}
\usepackage[T1]{fontenc}
\usepackage{amsmath,amssymb,amsthm}
\usepackage[]{authblk}
\usepackage{bbm}
\usepackage{mathrsfs}
\usepackage[numeric,initials,nobysame,msc-links,abbrev]{amsrefs}
\renewcommand{\eprint}[1]{\href{https://arxiv.org/abs/#1}{arXiv:#1}}
\newcommand{\pageafter}[1]{#1~pp.}
\BibSpec{article}{%
+{} {\PrintAuthors} {author}
+{,} { \textit} {title}
+{.} { } {part}
+{:} { \textit} {subtitle}
+{,} { \PrintContributions} {contribution}
+{.} { \PrintPartials} {partial}
+{,} { } {journal}
+{} { \textbf} {volume}
+{} { \PrintDatePV} {date}
+{,} { \issuetext} {number}
+{,} { \pageafter} {pages}
+{,} { } {status}
+{,} { \PrintDOI} {doi}
+{,} { available at \eprint} {eprint}
+{} { \parenthesize} {language}
+{} { \PrintTranslation} {translation}
+{;} { \PrintReprint} {reprint}
+{.} { } {note}
+{.} {} {transition}
+{} {\SentenceSpace \PrintReviews} {review}
}
\BibSpec{collection.article}{%
+{} {\PrintAuthors} {author}
+{,} { \textit} {title}
+{.} { } {part}
+{:} { \textit} {subtitle}
+{,} { \PrintContributions} {contribution}
+{,} { \PrintConference} {conference}
+{} {\PrintBook} {book}
+{,} { } {booktitle}
+{,} { \PrintDateB} {date}
+{,} { \pageafter} {pages}
+{,} { } {status}
+{,} { \PrintDOI} {doi}
+{,} { available at \eprint} {eprint}
+{} { \parenthesize} {language}
+{} { \PrintTranslation} {translation}
+{;} { \PrintReprint} {reprint}
+{.} { } {note}
+{.} {} {transition}
+{} {\SentenceSpace \PrintReviews} {review}
}
\usepackage{verbatim}
\usepackage{mathtools}
\usepackage{accents}
\usepackage{color}
\usepackage{bbm}
\usepackage{subcaption}
\usepackage{enumitem}
\usepackage{fullpage}

\usepackage{tikz} 

\usepackage[capitalise]{cleveref}

\usetikzlibrary{shapes}
\usetikzlibrary{arrows}
\usetikzlibrary{shapes.misc}
\usetikzlibrary{decorations.pathreplacing}
\usetikzlibrary[patterns]
\usetikzlibrary{hobby}
\usetikzlibrary{arrows.meta}

\tikzset{cross/.style={cross out, draw=black, minimum size=2*(#1-\pgflinewidth), inner sep=0pt, outer sep=0pt},
cross/.default={1pt}}

\setlist[itemize]{leftmargin=*}
\setlist[enumerate]{leftmargin=*,label=(\roman*),ref=(\roman*)}

\newtheorem{thm}{Theorem}
\crefname{thm}{Theorem}{Theorems}
\newtheorem{cor}[thm]{Corollary}
\crefname{cor}{Corollary}{Corollaries}
\newtheorem{lem}[thm]{Lemma}
\crefname{lem}{Lemma}{Lemmas}
\newtheorem{prop}[thm]{Proposition}
\crefname{prop}{Proposition}{Propositions}
\newtheorem{conj}[thm]{Conjecture}
\crefname{conj}{Conjecture}{Conjectures}

\crefname{ques}{Question}{Questions}
\theoremstyle{definition}

\crefname{defn}{Definition}{Definitions}
\newtheorem{defi}[thm]{Definition}
\crefname{defi}{Definition}{Definitions}
\newtheorem{rem}[thm]{Remark}
\crefname{rem}{Remark}{Remarks}

\crefname{ex}{Example}{Examples}

\crefname{obs}{Observation}{Observations}

\crefname{claim}{Claim}{Claims}

\crefname{ass}{Assumption}{Assumptions}

\numberwithin{thm}{section}

\newcommand{\cA}{\ensuremath{\mathcal A}}

\newcommand{\cE}{\ensuremath{\mathcal E}}

\newcommand{\cS}{\ensuremath{\mathcal S}}
\newcommand{\cT}{\ensuremath{\mathcal T}}
\newcommand{\cU}{\ensuremath{\mathcal U}}

\newcommand{\bbH}{{\ensuremath{\mathbb H}} }

\newcommand{\bbP}{{\ensuremath{\mathbb P}} }

\newcommand{\bbR}{{\ensuremath{\mathbb R}} }

\newcommand{\bbZ}{{\ensuremath{\mathbb Z}} }

\let\oldd\d
\renewcommand{\d}{{\ensuremath{\delta}}}

\newcommand{\e}{{\ensuremath{\varepsilon}}}

\let\oldk\k
\renewcommand{\k}{{\ensuremath{\kappa}}}

\let\oldl\l
\renewcommand{\l}{{\ensuremath{\lambda}}}
\let\oldL\L
\renewcommand{\L}{{\ensuremath{\Lambda}}}

\let\oldo\o
\renewcommand{\o}{{\ensuremath{\omega}}}
\let\oldO\O
\renewcommand{\O}{{\ensuremath{\Omega}}}
\newcommand{\p}{{\ensuremath{\pi}}}
\let\oldr\r
\renewcommand{\r}{{\ensuremath{\rho}}}
\newcommand{\s}{{\ensuremath{\sigma}}}
\let\oldS\S
\renewcommand{\S}{{\ensuremath{\Sigma}}}
\let\oldt\t
\renewcommand{\t}{{\ensuremath{\tau}}}
\let\oldu\u
\renewcommand{\u}{{\ensuremath{\upsilon}}}

\renewcommand{\>}{\rangle}

\newcommand{\1}{{\ensuremath{\mathbbm{1}}} }

\newcommand{\pc}{\ensuremath{p_{\mathrm{c}}} }

\renewcommand{\leq}{\leqslant}
\renewcommand{\geq}{\geqslant}
\renewcommand{\le}{\leqslant}
\renewcommand{\ge}{\geqslant}
\renewcommand{\to}{\rightarrow}

\newcommand{\bt}{\begin{thm}}
\newcommand{\et}{\end{thm}}
\newcommand{\bl}{\begin{lem}}
\newcommand{\el}{\end{lem}}
\newcommand{\bp}{\begin{prop}}
\newcommand{\ep}{\end{prop}}
\newcommand{\bcor}{\begin{cor}}
\newcommand{\ecor}{\end{cor}}
\newcommand{\br}{\begin{rem}}
\newcommand{\er}{\end{rem}}
\newcommand{\bcon}{\begin{conj}}
\newcommand{\econ}{\end{conj}}

\newcommand{\bpro}{\begin{proof}}
\newcommand{\epro}{\end{proof}}

\newcommand{\be}{\begin{equation}}
\newcommand{\ee}{\end{equation}}

\expandafter\mathchardef\expandafter\varphi\number\expandafter\phi\expandafter\relax
\expandafter\mathchardef\expandafter\phi\number\varphi

\newcommand{\eps}{\varepsilon}

\newcommand{\sig}{\sigma}
\newcommand{\Sig}{\Sigma}

\newcommand{\Ei}{{\cal E}}

\renewcommand{\P}{{\mathbb P}}
\newcommand{\R}{{\mathbb R}}
\newcommand{\Z}{{\mathbb Z}}

\begin{document}

\title{Subcritical bootstrap percolation via Toom contours\footnote{This work was supported by ERC Starting Grant 680275 ``MALIG.''}}
\renewcommand\Affilfont{\small}
\author{Ivailo Hartarsky\thanks{\textsf{hartarsky@ceremade.dauphine.fr}} } 
\author{R\'eka Szab\'o\thanks{\textsf{szabo@ceremade.dauphine.fr}}}
\affil{CEREMADE, CNRS, Universit\'e Paris-Dauphine, PSL University\protect\\Place du Mar\'echal de Lattre de Tassigny, 75016 Paris, France}
\date{\vspace{-0.25cm}\today}

\maketitle

\begin{abstract}
In this note we provide an alternative proof of the fact that subcritical bootstrap percolation models have a positive critical probability in any dimension. The proof relies on a recent extension of the classical framework of Toom. This approach is not only simpler than the original multi-scale renormalisation proof of the result in two and more dimensions, but also gives significantly better bounds. As a byproduct, we improve the best known bounds for the stability threshold of Toom's North-East-Center majority rule cellular automaton.
\end{abstract}
\noindent\textbf{MSC2020:} Primary 60K35; Secondary 60C05, 82C20
\\
\textbf{Keywords:}  bootstrap percolation, Toom rule, North-East-Center majority, critical probability, stability threshold, Toom contour

\section{Introduction}
Bootstrap percolation is a statistical mechanics model initially introduced to model magnetic materials at low temperature \cite{Chalupa79}. It has proved useful for studying the dynamics of the Ising and kinetically constrained models (see \cite{Morris17} for a review), but it has also been used more directly to model e.g.\ the spread of news on a social network (see \cite{Banerjee20} for a survey). Since its introduction many variants of this cellular automaton have been investigated. This led to the study of the universality classes of bootstrap percolation models defined by the various possible behaviours with a random i.i.d.\ initial condition. This classification was completed in two dimensions \cite{Balister16,Bollobas15,Bollobas14} and was recently extended to higher dimensions by Balister, Bollob\'as, Morris and Smith \cite{Balister22,Balister22a,Balister22b}.

An apparently unrelated area of study is the one of random perturbations of monotone cellular automata. Perhaps the most central result in this domain is due to Toom \cite{Toom80}. Motivated by reliable computing via celullar automata, he proved an efficient necessary and sufficient condition for a monotone cellular automaton to be resistant to noise.

Recently, the first author \cite{Hartarsky22sharpness} noted that these two domains are closely related. In particular, he showed the classical \cite{Toom80} to be equivalent to part of the main result of \cite{Balister22}, still in preparation at the time. Around the same time Swart, Toninelli and the second author \cite{Swart22} extended Toom's framework further to encompass attractive probabilistic cellular automata instead of deterministic ones. The goal of the present work is to adapt their tools to provide an alternative proof of the full universaility result of \cite{Balister22}, which is also quantitatively more efficient and establishes yet a new connection between the two settings.
\subsection{Bootstrap percolation}
\label{S:bootstrap}
Fix a dimension $d\ge 2$. A \emph{bootstrap percolation model} on $\mathbb Z^d$ is a monotone cellular automaton specified by an \emph{update family} $\mathcal U$, that is, a finite family of finite subsets of $\mathbb Z^d\setminus\{o\}$ ($o$ denotes the origin of $\bbZ^d$). We start from an initial configuration $x\in\O:=\{0,1\}^{\mathbb Z^d}$. At each time step the process evolves according to a local rule. Denoting by $X_t$ the set of vertices in state 0 at time $t\geq 0$, the set $X_{t+1}$ is defined by
\be\label{eq:BPevolution}
X_{t+1} := X_{t} \cup\left\{ i\in\mathbb Z^d\colon \exists U \in\mathcal U \mbox{ such that } i+U \subset X_t\right\}.
\ee
That is, a site $i$ becomes 0 if and only if it was already in state 0 or there exists a finite $U\subset\bbZ^d\setminus\{o\}$ in the update family $\cU$ such that all elements of $i+U$ are in state 0. Note that randomness is only involved in the state of the configuration at time 0, after that the evolution of the process is deterministic. For an initial configuration $X_0=X$ we denote by $[X] = \bigcup_{t\geq 0}X_t$ the \emph{closure} of $X$ and say that the process \emph{percolates} if $[X]=\mathbb Z^d$. Let $\mathbb P_p$ denote the law of the process starting from an initial configuration where each site is in state 0 with probability $p$ and in state 1 with probability $1-p$ independently from each other. We define the \emph{critical parameter}
\[\pc(\cU):=\inf \left\{p\in[0,1]\colon  \mathbb P_p\left([X]=\mathbb Z^d\right)=1\right\}.\]
Note that by ergodicity $\mathbb P_p([X]=\mathbb Z^d)\in\{0,1\}$ for all $p\in[0,1]$. The main question we would like to address is for which $\mathcal U$ the above phase transition is non-trivial in the sense $\pc(\cU)>0$. The answer was suggested by Balister, Bollob\'as, Przykucki and Smith \cite{Balister16} and requires a few more notions to state.

We denote by $S^{d-1}$ the unit sphere and by $\langle\cdot,\cdot\rangle$ the scalar product in $\R^d$. For each unit vector $u\in S^{d-1}$ we let $\mathbb H_u:=\{v\in \R^d\colon \langle v,u\rangle < 0\}$ denote the open half-space whose boundary is perpendicular to $u$. We say that a direction $u\in S^{d-1}$ is \emph{stable}, if $[\mathbb H_u\cap \Z^d] =\mathbb H_u\cap \mathbb Z^d$, and denote by $\mathcal S\subset S^{d-1}$ the set of all stable directions. We say that a direction $u$ is \emph{strongly stable}, if it is in the interior of $\mathcal S$. We say that an update family is \emph{subcritical}, if every hemisphere of $S^{d-1}$ contains a strongly stable direction.

Our goal is to provide a simple proof of the following result recently established in \cite{Balister22} (see \cite{Balister22b} for the converse).

\begin{thm}
	\label{T:bootstrap}
	If $\cU$ is subcritical, then $\pc(\cU)>0$.
\end{thm}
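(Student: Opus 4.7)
The plan is to prove \cref{T:bootstrap} by a Peierls-type argument built on the Toom contour framework of \cite{Swart22}, bypassing the multi-scale renormalisation of \cite{Balister22}. The starting point is a geometric extraction: using subcriticality of $\cU$, I would choose a finite collection of strongly stable unit vectors $u_1,\dots,u_m\in\mathrm{int}(\cS)$ whose negatives have the origin $o$ in the interior of their convex hull. Such a collection exists by compactness of $S^{d-1}$ together with the fact that every open hemisphere meets $\mathrm{int}(\cS)$. Strong stability moreover provides, for each $u_j$, an open cone of stable directions containing $u_j$, which will supply the geometric rigidity needed later.

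The central combinatorial step is to adapt the construction of \cite{Swart22} so as to associate, to each realisation of the event $\{o\in[X_0]\}$, a Toom contour rooted at $o$ whose ``source'' vertices all lie in $X_0$. Such a contour consists of $m$ directed subgraphs (``lines'') sharing the root, together with bifurcation vertices at which a line splits according to the update set $U\in\cU$ that infected the corresponding site. Each line is constructed inductively by tracing backwards in time: if a site $i$ became $0$ at time $t+1$ because $i+U\subset X_t$, then along direction $-u_j$ one selects the element of $i+U$ best aligned with $u_j$ and iterates. Strong stability of $u_j$, together with its cone of stable directions, should guarantee that the $j$-th line stays confined to a translate of $\bbH_{u_j}$ and terminates only at sites of $X_0$.

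Given such a contour representation, the probabilistic half of the argument is a union bound. Two estimates are needed: a combinatorial bound of the form $C_1^n$ on the number of Toom contours with $n$ vertices rooted at $o$, with $C_1=C_1(\cU,u_1,\dots,u_m)$ depending on the bounded branching of $\cU$ and a tree-counting argument; and a linear lower bound $\alpha n$ on the number of distinct source vertices of any such contour, which by independence of the initial configuration yields a probability bound $p^{\alpha n}$. Summing gives $\mathbb P_p(o\in[X_0])\le\sum_{n\ge n_0}(C_1 p^\alpha)^n$, which can be made strictly less than $1$ for $p$ sufficiently small; ergodicity then promotes this to $\pc(\cU)>0$.

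The main obstacle is the contour-construction step: one must show that the $m$ lines can be chosen coherently to respect both the update-family structure of the dynamics and the geometric confinement to cones of stable directions, and that the resulting contour carries at least $\alpha n$ distinct sources. Making this precise (so as to invoke strong stability at every bifurcation) and bounding the combinatorics in terms of $\cU$ is the core difficulty, and is also the point at which the subcritical hypothesis enters quantitatively. This should be achievable by adapting the analogous step of \cite{Swart22} to the noiseless bootstrap-percolation dynamics, where randomness resides only in the initial state.
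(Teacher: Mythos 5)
Your overall route is the same as the paper's: extract strongly stable directions summing to zero (cf.\ \cref{lem:directions} and Eq.~\eqref{eq:polar}), build Toom contours as in \cite{Swart22} rooted at the space-time point where $o$ gets infected, and run a Peierls argument. Two points in your sketch are off, one minor and one fatal. The minor one: it is the sinks (the endpoints, embedded at time $0$), not the sources, that must lie in $X_0$, and merely tracing $\s$ backward walks from the root --- choosing in each infecting set $i+U$ the element best aligned with $u_j$, which is essentially equivalent to using transversal sets $A_s\in\cA$ with $A_s\subset\bbH_{-u_s}$ --- produces only $\s$ points of $X_0$, however large the contour is. The mechanism that forces many distinct $X_0$-points is the full source/sink structure of Toom graphs (new sources are inserted when charges spread apart, and exactly one charge of each type must arrive at each sink), combined with the zero-sum property (\cref{L:zerosum}), which uses Eq.~\eqref{eq:polar} to bound the number of diagonal edges by $R/\e$ times the number of non-root sources (\cref{L:edgebnd}); half-space confinement of the walks plays no role and is not what strong stability is used for. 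You defer this to ``adapting \cite{Swart22}'', which is acceptable for the contour construction itself (\cref{T:contour} is quoted from there), but not for the count, as explained next.

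The fatal gap is your Peierls bookkeeping: ``$C_1^n$ contours with $n$ vertices'' against ``at least $\alpha n$ distinct $X_0$-sites'' fails in the bootstrap setting, precisely because randomness lives only at time $0$. The space-time contours of \cref{T:contour} contain vertical (waiting) segments of arbitrary length: once a backward walk reaches a site of $X_0$ it descends vertically to time $0$, so a contour with a fixed number of sinks --- hence fixed probabilistic cost $p^{m+1}$ --- can have arbitrarily many vertices, and at fixed cost there are infinitely many contours (one for each root height $t$ and each choice of waiting times). So either $\alpha=0$ or the count at fixed cost is infinite, and the union bound diverges. This is exactly where the present paper departs from \cite{Swart22,Toom80}: the notion of \emph{shattered contours} (\cref{subsec:shattered}), i.e.\ equivalence classes of contours under time shifts of their shards, retains only the spatial embedding of the diagonal parts; then \cref{L:edgebnd} ties the number of diagonal edges to the number of shards, and the encoding of \cref{lem:counting} yields an exponential-in-$m$ count against the cost $p^{m+1}$ (followed by a Borel--Cantelli/local-modification step, since the Peierls sum is only shown to be finite rather than less than $1$). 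Without an idea playing the role of shattering --- some way of quotienting out the time direction --- your plan as written does not close.
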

\begin{rem}
	Following \cite[Remark 1.5]{Hartarsky20II} (see also \cite[Section 2.6]{Swart22}), let us note that Theorem~\ref{T:bootstrap} applies equally well to a space-time inhomogeneous version of bootstrap percolation. Namely, we apply a random update family at each space-time point chosen independently among finitely many families $\cU_1,\dots,\cU_n$. The model is called \emph{subcritical} if the single update family $\cU=\bigcup_{j=1}^{n}\cU_j$ is subcritical. In this case for some fixed $p>0$ and initial state with law $\bbP_p$, the process does not percolate a.s.\ w.r.t.\ the inhomogeneity.
\end{rem}

The first instances of Theorem~\ref{T:bootstrap} were established already by Schonmann \cite{Schonmann90,Schonmann92} in the 1990s. Theorem~\ref{T:bootstrap} restricted to $d=2$, was proved by Balister, Bollob\'as Przykucki and Smith \cite{Balister16}, using a rather involved multi-scale renormalisation. They conjectured Theorem~\ref{T:bootstrap} \cite[Conjecture~16]{Balister16} and suggested that modulo further technical difficulties they expect their approach to work in higher dimensions. This conjecture was reiterated in \cite[Conjecture 1.6]{Morris17} and recently verified by Balister, Bollob\'as, Morris and Smith \cite{Balister22} by the same technique. Meanwhile, using Toom's result \cite{Toom80} (see Section \ref{subsec:Toom}), the first author \cite{Hartarsky22sharpness} proved Theorem \ref{T:bootstrap} for $\mathcal U$ contained in some half-space $\mathbb H_u$, that is, for every  $U\in\mathcal U$ it holds that $U\subset\mathbb H_u$.

As already noticed by Schonmann \cite{Schonmann90}, oriented site percolation can be viewed as a subcritical bootstrap percolation model (see \cite{Hartarsky22sharpness} for a generalisation of this fact). Quantitative rigorous bounds on $\pc$ for this model had been obtained much earlier (see \cite{Durrett84} for an overview).

For general models, particularly non-oriented ones, bounds are much more difficult to obtain. For this reason \cite{Balister16} introduced a benchmark two-dimensional subcritical model called \emph{directed triangular bootstrap percolation} (DTBP) given by
\be
\label{eq:DTBP}
\cU^{\mathrm{DTBP}}:=\left\{\{(1,0),(0,1)\}, \{(-1,-1),(0,1)\}, \{(-1,-1),(1,0)\}\right\}
\ee
(see Fig.~\ref{fig:DTBP}). The proof of Theorem~\ref{T:bootstrap} in \cite{Balister16} gave the lower bound in
\begin{equation}
\label{eq:BBPS:DTBP}
10^{-101}<\pc\left(\cU^{\mathrm{DTBP}}\right)<0.2452,
\end{equation}
while the upper bound was proved by the first author \cite{Hartarsky21} also by a general approach. Naturally, the lower bound in Eq.~\eqref{eq:BBPS:DTBP} is more disappointing and \cite[Question 17]{Balister16} asks for improving that. Our proof of Theorem \ref{T:bootstrap} provides the following improvement, still far from the nonrigorous numerical estimate $\pc\left(\cU^{\mathrm{DTBP}}\right)\approx 0.118$ put forward in \cite{Balister16}.

\begin{thm}
	\label{th:DTBP}
	For the DTBP update family given in Eq.~\eqref{eq:DTBP}, $\pc\left(\mathcal U^{\mathrm{DTBP}}\right)> 2.8\cdot 10^{-6}$.
\end{thm}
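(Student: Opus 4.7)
The plan is to apply the general Toom-contour bound on $\pc$ that we establish while proving Theorem~\ref{T:bootstrap}, specialised carefully to the DTBP update family $\cU^{\mathrm{DTBP}}$, and then optimised numerically. The general strategy associates to the event $o\in[X]$ a rooted Toom contour whose ``seeds'' must be in state $0$ in the initial configuration, yielding a union bound
\[
\bbP_p(o\in[X])\leq \sum_n N_n\, p^{a_n},
\]
where $N_n$ is the number of rooted contours of a given combinatorial size $n$ and $a_n$ their minimal number of seeds. If this series converges for some $p>0$, then by the ergodicity observation in Section~\ref{S:bootstrap} the origin is a.s.\ not in $[X]$ at that $p$, and hence $\pc(\cU^{\mathrm{DTBP}})\geq p$.

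For DTBP the inputs to the bound are particularly favourable. Each of the three rules has size $2$, so at every internal vertex of a contour only two outgoing edges are available, keeping the branching factor small. Moreover, the vectors $(1,0)$, $(0,1)$ and $(-1,-1)$ are cyclically permuted by the linear map sending $(1,0)\mapsto(0,1)\mapsto(-1,-1)\mapsto(1,0)$, and this map takes $\cU^{\mathrm{DTBP}}$ to itself. Consequently the stable set $\cS\subset S^{1}$ is invariant under an order-three symmetry, so it suffices to exhibit a single strongly stable direction and take its orbit to produce three strongly stable directions covering every open hemisphere. These three directions together with the three size-two rules are the only data fed into the bound.

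The main obstacle is a sharp enumeration of rooted Toom contours built from these inputs. The worst-case branching factor extracted from the generic proof of Theorem~\ref{T:bootstrap} is too crude to improve on the previous $10^{-101}$; instead one has to exploit that at each internal vertex only specific pairs of outgoing edges can arise, which substantially reduces the exponential growth rate $C=\limsup N_n^{1/n}$. Once $C$ and the asymptotic seed-to-edge ratio $a=\liminf a_n/n$ are controlled, the condition $Cp^{a}<1$, combined with a short numerical optimisation over the choice of the three strongly stable directions and any remaining free parameters of the construction, yields $\pc(\cU^{\mathrm{DTBP}})>2.8\cdot 10^{-6}$. The residual gap to the nonrigorous value $\approx 0.118$ should be understood as a limitation inherent to the generality of the Toom-contour construction rather than to the enumeration step itself.
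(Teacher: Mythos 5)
Your overall strategy (associate to $o\in[X]$ a rooted Toom contour, write a Peierls sum, show it converges at $p=2.8\cdot 10^{-6}$) is the same as the paper's, but the proposal stops exactly where the proof has to start: every quantitative ingredient is asserted rather than derived. Theorem~\ref{th:DTBP} is in the end an explicit evaluation of the general bound Eq.~\eqref{eq:pc:bound}, and this requires three concrete steps: (a) fixing the data of Lemma~\ref{lem:directions} for DTBP, namely the sets $A_1,A_2,A_3$, the directions $u_1,u_2,u_3$ and the coefficients $\lambda_s$ of Eq.~\eqref{eq:DTBPsets}, and computing from Eq.~\eqref{eq:def:R:eps} the drift constants $\varepsilon=1$ and $R=6$; (b) a concrete enumeration of contours, which in the paper means passing from embedded Toom contours to \emph{shattered} contours and proving Lemma~\ref{lem:counting} via the encoding by forks, increments and separators, combined with the edge bound $n\le Rm/\varepsilon$ of Lemma~\ref{L:edgebnd}; (c) the arithmetic $\bigl(2^{6}(2^{2}-1)\cdot 3\cdot 2\cdot 9^{9}/(3^{3}6^{6})\bigr)^{-1}=1/(2\cdot 3^{11})>2.8\cdot 10^{-6}$. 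You replace (a)--(c) by ``once $C=\limsup N_n^{1/n}$ and $a$ are controlled, a short numerical optimisation yields the bound'', but controlling $C$ is precisely the hard part, and no enumeration scheme is specified from which it could be extracted. In particular, counting embedded Toom contours directly fails here: because sinks sit at time $0$ and paths may contain arbitrarily many vertical segments, their number explodes, which is exactly why the paper introduces shattered contours (even the crude bound $|\cT_{n,m}|\le(2|A|)^{n+\sigma(m+1)}$ mentioned in the paper is a bound on shattered contours). Without saying which objects are counted, how they are encoded, and what exponential rate results, the inequality $Cp^{a}<1$ at the claimed value of $p$ cannot be verified, so the stated constant is not proved.

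Two further inaccuracies. First, convergence of the Peierls series at some $p$ does not, ``by ergodicity'', give that $o\notin[X]$ almost surely: the sum may exceed $1$, and the paper's proof of Theorem~\ref{T:bootstrap} goes through Borel--Cantelli plus a truncation of $X_0$ far from the origin to conclude $\bbP_p(o\notin[X])>0$ for every $p$ below the convergence threshold. Second, your description of the branching does not match the contour structure: an internal vertex carries exactly one outgoing edge of its charge $s$, whose spatial increment lies in $A_s$ with $|A_s|=2$; the ``pairs of outgoing edges'' arise at sources, which must be forks (condition~\ref{ix}), and it is the fork alphabet of size $(2^{\sigma}-2)|A|(|A|-1)/2$ that enters the count. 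Finally, the order-three symmetry of DTBP plays no role and there is no optimisation over directions: one simply fixes the polar decomposition $\sum_{s}\lambda_s u_s=0$ with $A_s\subset\bbH_{-u_s}$ as in Eq.~\eqref{eq:DTBPsets} and reads off $\varepsilon$ and $R$.
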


\begin{figure}
	\centering
	\begin{subfigure}{8cm}
		\centering
		\includegraphics[width=7cm]{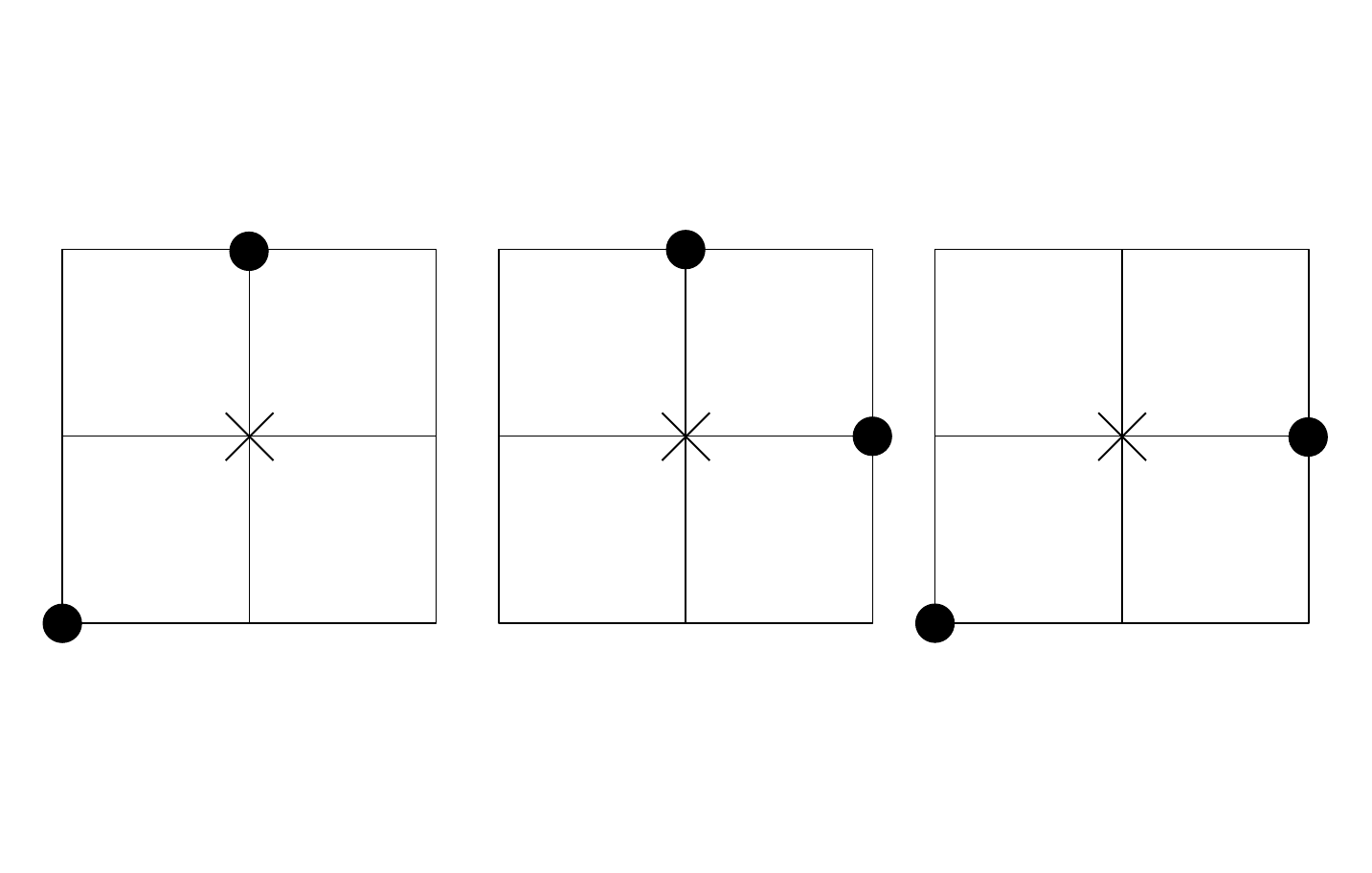}
		\caption{\raggedright Family $\cU$ coinciding with $\{A_1,A_2,A_3\}$. The cross marks $o$, the solid dots mark elements of $A_s$ for $s\in\S$.}
	\end{subfigure}
	\quad\begin{subfigure}{4.55cm}
		\centering
		\includegraphics[width=4.5cm]{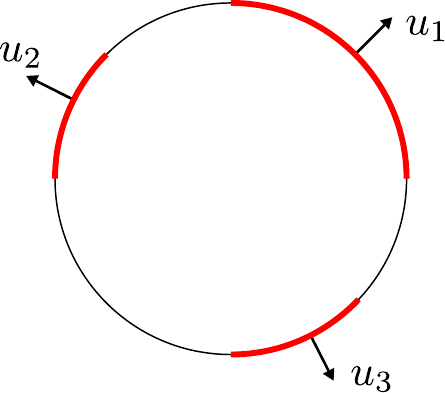}
		\caption{Stable directions.}
	\end{subfigure}
	\caption{The DTBP example with parameters as in~Eq.~\eqref{eq:DTBPsets}.}
	\label{fig:DTBP}
\end{figure}

\subsection{Perturbed cellular automata}
\label{subsec:Toom}
Toom \cite{Toom80} studied random perturbations of monotone cellular automata. More precisely, we are given some map $\phi:\O\to\{0,1\}$ depending on finitely many coordinates of the input and such that $\phi(x)\le\phi(y)$ whenever $x\le y$ for the coordinate-wise order. We start from the configuration $x_0$ equal to $1$ everywhere and let $x_{t+1}(i)=\phi(x_t(\cdot+i))$ with probability $1-p$ and $x_{t+1}(i)=0$ with probability $p$.\footnote{In fact, Toom studied a more general type of noise, but the monotonicity property allows one to reduce their study to the simple one we focus on.} We then set
\[\pc(\phi):=\sup\left\{p\in[0,1],\liminf_{t>0}\bbP_p\left(x_t(o)=1\right)>0\right\}.\]
One is interested for which $\phi$ we have $\pc(\phi)>0$, which suggests that the corresponding cellular automaton does not completely lose memory of the initial state when subjected to noise. In order to answer this, we need one more notion. 

We say that $\phi$ is an \emph{eroder} if any initial condition with only finitely many sites in state $0$ eventually becomes all $1$ in the absence of noise ($p=0$). Toom \cite{Toom80} famously proved that $\phi$ is an eroder if and only if $\pc(\phi)>0$. The hard implication ($\pc(\phi)>0$ if $\phi$ is an eroder) was shown in \cite{Hartarsky22sharpness} to follow from Theorem~\ref{T:bootstrap}. Conversely, our proof of Theorem~\ref{T:bootstrap} relies on an improvement of the method of \cite{Toom80} recently revisited and generalised by Swart, Toninelli and the second author \cite{Swart22} (see Section~\ref{sec:Toom}). Restricted or full versions of Toom's result have been proved alternatively in \cite{Berman88,Bramson91,Gacs95,Gacs21,Gacs88,Swart22} (see \cite[Section 1.4]{Swart22} for more detailed background). Most of them rely on a Peierls argument.

An important two-dimensional example is the Toom North-East-Center majority rule: \begin{equation}
\label{eq:def:Toom}\phi^{\mathrm{NEC}}(x):=\1_{x(o)+x((1,0))+x((0,1))\ge 2}.\end{equation}
It was introduced in \cite{Vasilev69} and $\pc\left(\phi^{\mathrm{NEC}}\right)>0$ can be recovered from \cite{Toom74}. The best explicit bound $\pc\left(\phi^{\mathrm{NEC}}\right)\ge3^{-21}\approx 9.6\cdot 10^{-11}$ was obtained recently \cite{Swart22}. It turns out that the proof of Theorem \ref{th:DTBP} leads to the following improvement to be compared with the nonrigorous numerical estimate $\pc\left(\phi^{\mathrm{NEC}}\right)\approx 0.053$ \cite{Swart22}.
\begin{thm}
	\label{th:Toom}
	For the Toom rule of \eqref{eq:def:Toom} we have $2.8\cdot 10^{-6}<\pc\left(\phi^{\mathrm{NEC}}\right)$.
\end{thm}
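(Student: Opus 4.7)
The plan is to apply the Toom contour method of \cite{Swart22}, in the same form as used in the proof of Theorem \ref{th:DTBP}, directly to the NEC rule. Because Theorem \ref{th:Toom} concerns a perturbed cellular automaton rather than bootstrap percolation, this is actually the original setting of \cite{Swart22}, so the framework applies without passing through Theorem \ref{T:bootstrap}.

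First, I would set up Toom contours for the NEC dynamics. The relation $\phi^{\mathrm{NEC}}(x_t(\cdot+i))=0$ is equivalent to saying that at least two of the three values $x_t(i)$, $x_t(i+(1,0))$, $x_t(i+(0,1))$ equal $0$. Unfolding this condition recursively, one associates to every space-time defect $(i,t)$ with $x_t(i)=0$ a three-coloured rooted tree whose leaves are noise events, each contributing a factor $p$. Each internal vertex branches into two children, corresponding to which pair of NEC inputs is declared to be $0$ at the previous time step, with the third (omitted) input assigning one of three colours to the branching.

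Second, I would choose three auxiliary direction vectors $\vec{u}_1,\vec{u}_2,\vec{u}_3\in\R^2$ that are polar to the NEC inputs $\{o,(1,0),(0,1)\}$ in the sense of \cite{Swart22}. The crucial point is that this choice can be made to mirror the triple used for DTBP in the proof of Theorem \ref{th:DTBP}: both NEC and DTBP are $2$-out-of-$3$ majority rules supported on three points of $\Z^2$ in general position, and an affine correspondence between the three NEC inputs $\{o,(1,0),(0,1)\}$ and the three DTBP offsets $\{(-1,-1),(1,0),(0,1)\}$ turns the two contour geometries into isomorphic objects. This is where the link between the two models is made precise.

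Third, I would invoke the Peierls-type estimate from \cite{Swart22}: the probability of realising a given contour is the product of $p$ over its leaves, and the number of contours with a prescribed number of leaves is controlled by a combinatorial factor determined by the direction vectors. Under the correspondence of the previous step, the counting bound obtained for DTBP in the proof of Theorem \ref{th:DTBP} transfers, and summing the resulting geometric series yields $p_c(\phi^{\mathrm{NEC}})>2.8\cdot 10^{-6}$. The main obstacle I anticipate is bookkeeping: in the bootstrap setting the leaves of a contour are initial sites in $X_0$, whereas in the NEC setting they correspond to noise events at arbitrary time steps. One must check that, once the space-time embedding constraints are matched via the affine correspondence, the contour count is genuinely identical; this is where the \emph{same} numerical constant $2.8\cdot 10^{-6}$ enters both theorems, rather than something slightly different.
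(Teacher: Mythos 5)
There is a genuine gap, and it sits exactly where you anticipated it. Your plan hinges on the claim that a planar affine correspondence between the NEC inputs $\{o,(1,0),(0,1)\}$ and the DTBP offsets $\{(-1,-1),(1,0),(0,1)\}$ makes ``the two contour geometries isomorphic'', so that the counting bound from the proof of Theorem~\ref{th:DTBP} transfers. No such two-dimensional correspondence can work: for $\phi^{\mathrm{NEC}}$ the randomness (noise) lives at \emph{every} space-time point of $\bbZ^2\times\bbZ$ and the neighbourhood contains the origin (the rule has memory), whereas for $\cU^{\mathrm{DTBP}}$ the randomness sits only in the initial condition at time $0$ and the update sets exclude the origin. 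Correspondingly, the objects counted in Lemma~\ref{lem:counting} are \emph{shattered} contours, whose very definition uses the bootstrap structure: sinks embedded at time $0$ (condition~\ref{xii}), vertical segments, and the equivalence under time shifts of shards. The contours you describe for NEC, with sinks at arbitrary space-time noise events, are the original objects of \cite{Swart22}, and applying that framework directly to $\phi^{\mathrm{NEC}}$ is precisely what yields the much weaker bound $3^{-21}$ quoted in the introduction, not $2.8\cdot10^{-6}$. To make your route rigorous you would have to redevelop the improved encoding of Lemma~\ref{lem:counting} from scratch in the perturbed-CA setting, which your proposal does not do.

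The paper's actual link between the two theorems is a space-time lift, not a planar one: by \cite[Proposition 3.1]{Hartarsky22sharpness}, $\pc\left(\phi^{\mathrm{NEC}}\right)=\pc(\cU)$ for a \emph{three-dimensional} bootstrap family $\cU$ whose update sets are the pairs from $\{(0,0,-1),(1,0,-1),(0,1,-1)\}$, and after an injective linear endomorphism of $\bbZ^3$ this becomes $\left\{U\times\{-1\}\colon U\in\cU^{\mathrm{DTBP}}\right\}$; the noise field of the automaton becomes the time-$0$ initial condition of this three-dimensional bootstrap model. One then reruns the proof of Theorem~\ref{th:DTBP} with $-1$ appended to the sites of $A_1,A_2,A_3$ and $0$ appended to $u_1,u_2,u_3$; the constants $\sigma$, $|A_s|$, $|A|$, $\varepsilon$, $R$ entering Eq.~\eqref{eq:pc:bound} are unchanged by this lift, which is the real (and only) reason the same number $2.8\cdot10^{-6}$ appears in both theorems. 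So the identical constant is a consequence of the invariance of these geometric quantities under the space-time embedding, not of an isomorphism between the planar NEC and DTBP models, and your argument is missing precisely this reduction.
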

\begin{rem}
	\label{rem:Toom}
	We are unaware of upper bounds on $\pc\left(\phi^{\mathrm{NEC}}\right)$, but the bound $0.3118$ follows from comparison with oriented site percolation and \cite{Gray80}. Applying the first author's work \cite{Hartarsky21,Hartarsky22sharpness}, one can prove $\pc\left(\phi^{\mathrm{NEC}}\right)<0.2452$. A sketch of the argument is provided in the appendix.
\end{rem}

\section{Preliminaries}
\label{sec:preliminaries}
For the rest of the paper we fix a subcritical update family $\cU$. While $\cU$ specifies sets of 0-s sufficient for the origin to become 0, it will be more convenient for us to work with an alternative representation of the model. Namely, in terms of sets of 1-s preventing $o$ to become 0 on the next time step. More precisely, let
\[\mathcal A:=\left\{\{i_1,\dots, i_n\}\colon  \forall j\in\{1,\dots,n\},i_j\in U_j\right\},\]
where $\cU=\{U_1,\dots,U_n\}$. We next show that for subcritical models one can find a suitable set of directions $u_i$ and sets $A_i\in\mathcal A$ so that $u_j$ `points towards' $A_j$.

\begin{lem}
	\label{lem:directions}
	For any subcritical $\cU$ there exists an integer $\s\in\{2,\dots,d+1\}$, strongly stable directions $u_1,\dots,u_{\s}\in S^{d-1}$, real coefficients $\l_1,\dots,\l_\s\in(0,1)$ and sets $A_1,\dots,A_\s\in\cA$ such that
	\begin{equation}
	\label{eq:polar}\sum_{j=1}^\s\l_ju_j=0
	\end{equation}
	and $A_j\subset \bbH_{-u_j}$ for all $j\in\{1,\dots,\s\}$.
\end{lem}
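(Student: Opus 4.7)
The plan is to split the proof into a convex-geometric step producing the directions $u_j$ and weights $\l_j$, and an elementary step selecting each $A_j$ from the transversals of $\cU$.

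For the first step, I would reformulate subcriticality in terms of the set $\cS^\circ\subset S^{d-1}$ of strongly stable directions, which is open by definition. The assumption that every open hemisphere meets $\cS^\circ$ is equivalent, via the separating hyperplane theorem, to the statement that the origin lies in the interior of $\operatorname{conv}(\cS^\circ)\subset\R^d$: any supporting hyperplane at $o$ would produce a closed half-space containing all of $\cS^\circ$, contradicting the subcriticality condition applied to the opposite hemisphere. From this I can extract finitely many $v_1,\dots,v_m\in\cS^\circ$ whose convex hull already contains $o$ in its interior, for example by covering a small ball around $o$ with convex combinations of elements of $\cS^\circ$.

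Next, I would make the directions generic. The finite set $F:=\bigcup_{U\in\cU}U$ determines finitely many hyperplanes $\{v\in\R^d:\langle v,x\rangle=0\}$ for $x\in F$, whose intersections with $S^{d-1}$ have measure zero. Since $\cS^\circ$ is open, I can perturb each $v_k$ slightly inside $\cS^\circ$ to some $u_k$ avoiding all of these hyperplanes; since having $o$ in the interior of a convex hull is an open condition on the vertices, small simultaneous perturbations preserve it. Applying Carath\'eodory's theorem to $o\in\operatorname{conv}(u_1,\dots,u_m)$ then extracts a subset of size $\s\le d+1$ with strictly positive coefficients $\l_1,\dots,\l_\s$ summing to $1$; since $u_j\in S^{d-1}$ is never zero we must have $\s\ge 2$ and each $\l_j\in(0,1)$.

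Finally, to build the sets $A_j$, I would use the equivalence between stability of $u_j$ and the condition that every $U\in\cU$ contains some $x$ with $\langle x,u_j\rangle\ge 0$, which follows directly from unfolding $[\bbH_{u_j}\cap\Z^d]=\bbH_{u_j}\cap\Z^d$. The genericity of $u_j$ rules out the equality case, making the inequality strict. Choosing one such element $i_k^{(j)}\in U_k$ from each $U_k\in\cU=\{U_1,\dots,U_n\}$ and setting $A_j:=\{i_1^{(j)},\dots,i_n^{(j)}\}\in\cA$ yields $A_j\subset\{v:\langle v,u_j\rangle>0\}=\bbH_{-u_j}$, as required. I do not expect a serious obstacle, as the whole argument reduces to standard convex-geometric facts (separating hyperplane, Carath\'eodory) together with the elementary characterisation of stability; the only point requiring care is arranging the perturbation while maintaining $o$ in the interior of the convex hull, which is why I insist on starting from a collection with $o$ strictly interior before perturbing rather than merely on the boundary.
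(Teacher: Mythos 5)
Correct, and essentially the paper's own argument: Hahn--Banach separation to show that the origin lies in the interior of the convex hull of the strongly stable directions, Carath\'eodory to reduce to $\sigma\le d+1$ directions with strictly positive coefficients, and genericity with respect to the finitely many hyperplanes $\{v\colon\langle v,x\rangle=0\}$, $x\in\bigcup_{U\in\cU}U$, to upgrade the stability condition $\langle x,u_j\rangle\ge 0$ to the strict containment $A_j\subset\bbH_{-u_j}$. The only (harmless) difference is the order of operations: the paper enforces genericity from the start by working with the set $\hat\cS$ of generic strongly stable directions, whose complement has empty interior among strongly stable directions, whereas you first extract a finite family and then perturb it inside the open set of strongly stable directions.
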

\begin{proof}
	Let $\ring\cS$ be the set of strongly stable directions. Let $\hat\cS$ be the set of $u\in\ring\cS$ such that for all $U\in\cU$ and $i\in U$ we have $\<i,u\>\neq0$. Assume that $o$ is not in the interior of the convex envelope of $\hat\cS$. Then by the finite dimensional Hahn--Banach separation theorem there exists an open hemisphere $H$ disjoint from $\hat\cS$. Yet, $\cU$ is subcritical, so $\ring S\cap H\neq\varnothing$. But this is a contradiction, since $\ring\cS\setminus\hat\cS$ has empty interior in $S^{d-1}$. 
	
	Thus, there exist directions in $\hat\cS$ whose convex combination is $o$. Moreover, by Carath\'eodory's theorem, we may select at most $d+1$ of these directions, so that the same holds, yielding~Eq.~\eqref{eq:polar}.
	
	Observe that a direction $u\in S^{d-1}$ is stable if and only if $A\cap\bbH_u=\varnothing$ for some $A\in\cA$. But if $u\in\hat\cS$ (and not just $u\in\ring\cS$) this is equivalent to the existence of $A\subset\bbH_{-u}$.
\end{proof}
For the rest of the paper we fix $\s$, $u_s$, $\l_s$ and $A_s$ for $s\in\S:=\{1,\dots,\s\}$ as in Lemma~\ref{lem:directions}. Their role is that if we perform a walk with steps in $A_s$, we end up drifting away in direction $u_s$. Indeed, such long walks will play an important part in our proof. Eq.~\eqref{eq:polar} guarantees that if we consider these walks for each $s\in\Sigma$, their endpoints globally spread out in space linearly, even though some of them may remain together. In order to formalise and quantify this effect, we need some more notation. 

Consider the linear forms $L_s:\bbR^d\to\bbR$ \begin{equation} L_s(i):=\l_s\<i,u_s\> \quad (s\in\S). \label{eq:def:L}
\end{equation}
Further let 
\begin{align}
\e&{}:=\min_{s\in\S}\min_{i\in A_s}L_s(i)>0,& 
R&{}:=-\sum_{s\in\S}\min_{i\in A}L_s(i),
\label{eq:def:R:eps}
\end{align}
where $A=\bigcup_{s\in\S} A_s\subset \bigcup_{U\in\cU}U$. Thus, $\e$ is the `minimum drift' and $R$ is a `total negative drift, if one makes a step in the wrong direction'. Note that $\e>0$, as $A_s\in \bbH_{-u_s}$ for all $s\in\Sig$. For our DTBP example (see Fig.~\ref{fig:DTBP}) we simply set $\s=3$ and
\begin{equation}\label{eq:DTBPsets}
\begin{aligned}
A_1&{}:=\{(1,0),(0,1)\}&u_1&{}:=\frac{1}{\sqrt 2}(1,1)&\l_1&{}:=\sqrt 2,\\
A_2&{}:=\{(-1,-1),(0,1)\}\quad&u_2&{}:=\frac{1}{\sqrt 5}(-2,1)\quad&\l_2&{}:=\sqrt 5,\\
A_3&{}:=\{(-1,-1),(1,0)\}&u_3&{}:=\frac{1}{\sqrt 5}(1,-2)&\l_3&{}:=\sqrt 5.
\end{aligned}
\end{equation}
These do verify Lemma~\ref{lem:directions} and the constants of Eq.~\eqref{eq:def:R:eps} are
\begin{align}\label{eq:DTBPconstants}
\e&{}=1,&R&{}=6. 
\end{align}

\section{Toom contours}
\label{sec:Toom}
In the present section we closely follow~\cite{Swart22} adapted to our bootstrap percolation setting. We refer to that work for more details, but the main idea is to construct a graph which explains how 0-s propagate to reach a given space-time point. Roughly speaking, if a vertex $i$ is in state 0 at time $t$, then either $i\in X_0$, or there is a vertex $j_s\in i+A_s$ in state 0 at time $t-1$ for each $s\in\Sigma$. We can then choose similar vertices for each $j_s$ that is not in $X_0$, and so on. This way, starting from $i$, for each $s\in\Sigma$ we define a walk with steps in $A_s$ ending at a vertex in $X_0$. We use these walks to construct Toom contours.

We define a \emph{directed graph} as a couple $(V,\vec E)$ where $V$ is a set of vertices and $\vec E$ is a set of directed edges that is a subset of $V\times V$. Let
\begin{align*}
\vec E_{\rm in}(v):={}&\left\{(u,v)\in\vec E\right\},&
\vec E_{\rm out}(v):={}&\left\{(v,w)\in\vec E\right\}
\end{align*}
denote the sets of directed edges entering and leaving a given vertex $v\in V$, respectively. We further define an \emph{directed graph with $\sig$ types of edges} to be a couple $(V,\Ei)$, where $\Ei=(\vec E_1,\ldots,\vec E_\sig)$ is a sequence of subsets of $V\times V$. We interpret $\vec E_s$ as the set of directed edges of type $s$.

\newcounter{conditions}
\begin{defi}[Toom graph]\label{def:toomgraph}
	A \emph{Toom graph} with $\sig\geq 2$ \emph{charges} is a directed graph with $\sig$ types of edges $(V,\Ei)=(V,(\vec E_1,\ldots,\vec E_\sig))$ such that each vertex $v\in V$ satisfies one of the following four conditions (see the left of Fig.~\ref{fig:toomcontour}):
	\begin{enumerate}
		\item\label{i} $|\vec E_{s,{\rm in}}(v)|=0=|\vec E_{s,{\rm out}}(v)|$ for all $s\in\S$,
		\item\label{ii} $|\vec E_{s,{\rm in}}(v)|=0$ and $|\vec E_{s,{\rm out}}(v)|=1$ for all $s\in\S$, 
		\item\label{iii} $|\vec E_{s,{\rm in}}(v)|=1$ and $|\vec E_{s,{\rm out}}(v)|=0$ for all $s\in\S$, 
		\item\label{iv} there exists $s\in\S$ such that $|\vec E_{s,{\rm in}}(v)|=1=|\vec E_{s,{\rm out}}(v)|$ and $|\vec E_{l,{\rm in}}(v)|=|\vec E_{l,{\rm out}}(v)|=0$ for each $l\in\S\setminus\{s\}$.
		\setcounter{conditions}{\value{enumi}}
	\end{enumerate}
\end{defi}

We set
\begin{align*}
V_\circ:={}&\left\{v\in V\colon \forall s\in\S,
|\vec E_{s,{\rm in}}(v)|=0\right\},\\
V_\star:={}&\left\{v\in V\colon \forall s\in\S,
|\vec E_{s,{\rm out}}(v)|=0\right\},\\
\forall s\in\S\quad V_s:={}&\left\{v\in V\colon 
|\vec E_{s,{\rm in}}(v)|=1=|\vec E_{s,{\rm out}}(v)|\right\}.
\end{align*}
Vertices in $V_\circ,V_\star$, and $V_s$ are called \emph{sources}, \emph{sinks}, and \emph{internal vertices} with \emph{charge} $s$, respectively. Vertices in $V_\circ\cap V_\star$ are called \emph{isolated vertices}. As we can see on the left of Fig.~\ref{fig:toomcontour}, we can imagine that at each source $\sig$ charges emerge, one of each type. Charges then travel via internal vertices of the corresponding charge through the graph until they arrive at a sink, in such a way that at each sink precisely $\sig$ charges arrive, one of each type. It is clear from this description that $|V_\circ|=|V_\star|$, i.e., the number of sources equals the number of sinks.

Let $\vec E:=\bigcup_{s=1}^\sig\vec E_s$ denote the directed edges of all types and $E:=\{\{v,w\}\colon (v,w)\in\vec E\}$ denote the corresponding set of undirected edges. We say that a Toom graph $(V,\Ei)$ is \emph{connected} if the associated undirected graph $(V,E)$ is connected. 

We call a Toom graph with a distinguished source $v_\circ\in V_\circ$ a \emph{rooted Toom graph}. For a rooted Toom graph $(V,\Ei,v_\circ)$ and $s\in\S$, we write
\begin{align*}
\vec E^\star_s:={}&\left\{(v,w)\in\vec E_s:v\in V_s\cup\{v_\circ\}\right\}&\vec E^\star:={}&\bigcup_{s\in\S}\vec E^\star_s,\\
\vec E^\circ_s:={}&\left\{(v,w)\in\vec E_s\colon v\in V_\circ\setminus\{v_\circ\}\right\}&\vec E^\circ:={}&\bigcup_{s\in\S}\vec E^\circ_s.\end{align*}
I.e.\ $\vec E^\star$ is the set of directed edges that have an internal vertex or the root as their starting vertex and $\vec E^\circ$ are all the other directed edges, starting at a source that is not the root.

Our next aim is to define Toom contours, which are connected Toom graphs that are embedded in space-time $\Z^{d+1}$ in a special way.
\begin{defi}[Embedding]\label{def:embedding}
	An \emph{embedding} of a Toom graph $(V,\Ei)$ is a map
	\[\psi:V\to\Z^d\times\Z:v\mapsto\left(\vec\psi(v),\psi_{d+1}(v)\right)\]
	that has the following properties (see Fig.~\ref{fig:toomcontour}):
	\begin{enumerate}
		\setcounter{enumi}{\value{conditions}}
		\item\label{v} $\psi_{d+1}(w)=\psi_{d+1}(v)-1$ for all $(v,w)\in\vec E$,
		\item\label{vi} $\psi(v_1)\neq\psi(v_2)$ for each $v_1\in V_\star$ and $v_2\in V$ with $v_1\neq v_2$,
		\item\label{vii} $\psi(v_1)\neq\psi(v_2)$ for each $s\in\S$ and $v_1,v_2\in V_s$ with $v_1\neq v_2$.
		\setcounter{conditions}{\value{enumi}}
	\end{enumerate}
\end{defi}

We interpret~$\vec\psi(v)$ and~$\psi_{d+1}(v)$ as the space and time coordinates of~$\psi(v)$ respectively. Condition \ref{v} says that directed edges $(v,w)$ of the Toom graph $(V,\cE)$ point in the direction of decreasing time. Condition~\ref{vi} says that sinks do not overlap with other vertices and condition~\ref{vii} says that internal vertices do not overlap with other internal vertices of the same charge. 

Recall the $\bbP_p$-random set $X_0$ and the sets $A_s$ for $s\in\Sig$ given by Lemma~\ref{lem:directions}.

\begin{defi}[Contour]\label{def:toomcontour}
	A \emph{Toom contour} is a quadruple $(V,\Ei,v_\circ,\psi)$ with $(V,\Ei,v_\circ)$ a connected rooted Toom graph and $\psi$ an embedding of it satisfying the following properties (see Fig.~\ref{fig:toomcontour})
	\begin{enumerate}
		\setcounter{enumi}{\value{conditions}}
		\item\label{viii} $\vec\psi(w)=\vec\psi(v)$ for all $(v,w)\in \vec E^\star$ such that $\vec\psi(v)\in \vec\psi(V_\star)$,
		\item\label{x} $\vec\psi(w)-\vec\psi(v)\in A_s$ for all $s\in\S$ and $(v,w)\in\vec E^\star_s$ such that $\vec\psi(v)\not\in \vec\psi(V_\star)$,
		\item\label{xi} $\vec\psi(w)-\vec\psi(v)\in A=\bigcup_{s\in\S} A_s$ for all $(v,w)\in\vec E^\circ$,
		\item\label{ix} $|\{\psi(w)\colon  (v,w)\in\vec E\}|=2$ for all $v\in V_\circ\setminus \{v_\circ\}$,
		\item\label{xii} $\psi_{d+1}(V_\star)=\{0\}$.
	\end{enumerate}
	The Toom contour is \emph{present} in $X_0$ if $\vec\psi(V_\star)\subset X_0$.
\end{defi}
For $s\in\S$, let us call pairs of space-time points of the form $\big((i,t),(i+j,t-1)\big)$ with $j\in A_s$ \emph{type $s$ diagonal segments} and pairs of space-time points of the form $\big((i,t),(i,t-1)\big)$ \emph{vertical segments}. Condition \ref{viii} says that the segments starting from a vertex with the same space coordinate as a sink are vertical. Condition \ref{x} says that edges of charge $s$ starting at internal vertices or the root map to diagonal segments of type $s$ if their starting point does not have the same space coordinate as any sink. Condition \ref{xi} gives that edges from sources other than the root map to diagonal segments of arbitrary type. Condition \ref{ix}, which is only needed to improve our quantitative bounds, ensures that all sources are \emph{forks}: the embeddings of all their $\s$ edges point to exactly two sites (see \cite[Theorem 32]{Swart22}). Together with condition~\ref{viii} it ensures that each source other than the root has a different space coordinate from any sink. Condition \ref{xii} ensures that all sinks are embedded with time coordinate $0$. Finally, the contour is present if sinks are mapped to vertices initially in state 0.

As mentioned before, Toom contours explain how 0-s propagate to reach site $i$ at time $t$. Informally we can imagine it as follows. The embedding of the root is $(i,t)$. The $\sigma$ charges emerging from it correspond to the walks discussed in Section~\ref{sec:preliminaries}, starting at $i$ with steps in $A_s$ and going backwards in time. Their embedding consist of diagonal segments until the walk reaches its endpoint in $X_0$, then the remaining segments up until time 0 are vertical. This ensures that indeed each sink is mapped to a point of $X_0$ at time 0. As we follow these charges, they spread out in space. Whenever they drift `too far' from each other, we add new sources with their corresponding charges to `bridge the gap'. To have some control over the direction of these new charges we are allowed to chose the first step of their embedding in any directions in $A$, while the rest maps to a similar walk in $A_s$. That is why the other sources behave slightly differently from the root, as seen in Conditions~\ref{x} and~\ref{xi}.

The following is~\cite[Theorem 7]{Swart22} in our setting.
\bt[Presence of a Toom contour]\label{T:contour} For any $t\ge 0$ such that $o\in X_t$ we have that a Toom contour rooted at $(o,t)$ is present in $X_0$.
\et

Since the reader may have difficulty reading Theorem~\ref{T:contour} out of \cite{Swart22}, let us explain how to fit our setting into theirs. We define the map $\phi:\O\to\{0,1\}$ as
\[\phi(x):=\begin{cases}0&\exists U\in\cU \text{ such that } x(i)=0\text{ for all }i\in U,\\
1&\text{otherwise}.
\end{cases}\] 
It is not hard to check that $\phi(x)=1$ if and only if there exists $A\in\cA$ such that $x(i)=1$ for all $i\in A$. For every space-time point $(i, t)\in \Z^{d+1}$, we define $\varphi_{i,t}:\O\to\{0,1\}$ by
\begin{equation}\label{eq:varphi}\varphi_{i,t}(x):=\begin{cases}\phi(x) &\text{if } i\in\bbZ^d\setminus X_0, \;t\in\bbZ,\\
0&\text{if }i\in X_0 ,\; t=0,\\
x(o)&\text{if }i\in X_0,\; t\neq 0.\end{cases}\end{equation}
For $X\subset \bbZ^d$ we define $x(X):=\1_{\bbZ^d\setminus X}\in\O$. We then verify from~Eq.~\eqref{eq:BPevolution} that for all $t>0$ and $i\in\bbZ^d$ we have $i\in X_t$ if and only if $\varphi_{i,t}(x(X_{t-1}-i))=0$. Further setting $X_t=\varnothing$ for $t<0$, \cite[Theorem 7]{Swart22} indeed becomes Theorem~\ref{T:contour}. Let us reassure the reader that this notation will not be used further.

\begin{figure}
	\begin{center}
		\includegraphics[width=14cm]{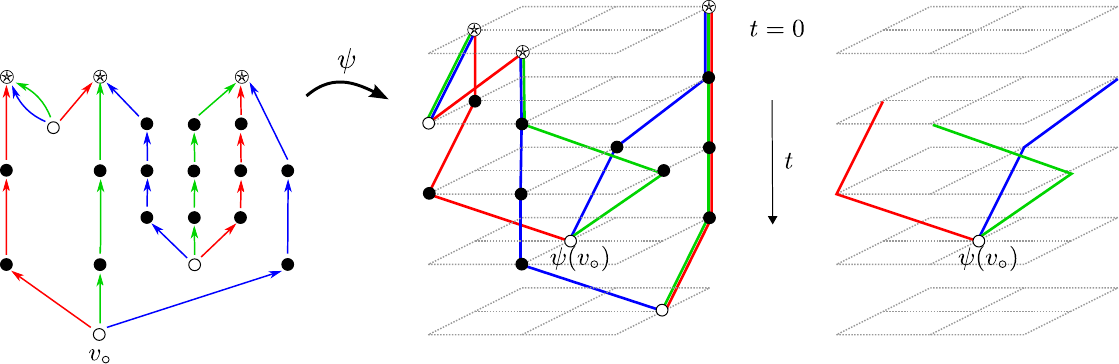}
		\caption{Toom contour for DTBP with $A_1, A_2, A_3$ as in Eq.~\eqref{eq:DTBPsets} and Fig.~\ref{fig:DTBP}. On the left is a Toom graph with $\sig=3$ charges rooted at $v_\circ$, each color representing a different type of edges, in the middle is its embedding in space-time, and on the right is its embedded root shard. Empty dots correspond to sources, while stars denote sinks. The contour is present if sinks belong to $X_0$.}
		\label{fig:toomcontour}
	\end{center}
\end{figure}

\section{Shattering contours}
\label{subsec:shattered}
The core of Toom's Peierls argument is to define contours that we can count efficiently. This is done in \cite{Toom80} for perturbed cellular automata where each space-time point is either 0 or applies the map $\phi$. Even though in \cite{Swart22} the definition of Toom contours is extended for more general models, including bootstrap percolation, in our setting their number explodes. Therefore, we need a more precise notion of a contour,  which is our main novelty together with bounding their number (see Lemma~\ref{lem:counting} below)
. It reflects the fact that the maps $\varphi_{i, t}$ from Eq.~\eqref{eq:varphi} do not depend on $t\in(0,\infty)$, allowing us to shift contours in time. Informally, this new contour is defined only by the space coordinates of the sources and the $\sigma$ walks emerging from them with steps in $A_s$. 

Let $(V,\cE, v_\circ, \psi)$ be a Toom contour. For any $v\in V$  denote by $V_v\subset V$ the set of vertices that can be reached from $v$ in the directed graph $(V, \cE)$ by edges whose embedding is a diagonal segment. 

\begin{defi}[Shard] Given a Toom contour $(V, \cE, v_\circ, \psi)$ and a source  $v\in V_\circ$ we say that $(V_v, \cE_v)$ is a \textit{shard} rooted at $v$, if it is the subgraph of $(V, \cE)$ spanned by $V_v$. We denote by $(V_v, \cE_v, \psi_{\restriction V_v})$ and $(V_v, \cE_v, \vec\psi_{\restriction V_v})$ its embedding in space-time and space respectively (see Fig.~\ref{fig:shards}).
\end{defi}

Thus, a shard is a set of $\s$ paths with distinct charges starting at a source. Definition~\ref{def:toomcontour} implies that the embedding of any path from a source other than the root to a sink is a nonempty sequence of diagonal segments followed by a possibly empty sequence of vertical segments. The same holds for the root, except that the sequence of diagonal edges might be empty, if the contour has only one sink. Therefore, it is easy to see that any Toom contour $(V, \cE, v_\circ, \psi)$ present in $X_0$ is uniquely determined by $v_\circ$ and the set of its embedded shards $\{(V_v, \cE_v, \psi_{\restriction V_v})\colon v\in V_\circ\}$.

We refer to vertices $w$ in a shard $(V_v, \cE_v)$ with $|\vec E_{\text{out}}(w)|=0$ as its \textit{endpoints}. We say that two embedded shards are \textit{connected}, if they have endpoints with identical space coordinates in their embedding. Note that the set of embedded shards of a Toom contour is connected.

We say that two embedded shards $(V,\cE,\psi)$ and $(V',\cE',\psi')$ are \emph{equivalent}, if there exists a bijection $\p:V\to V'$ such that it is an isomorphism between $(V,\vec E_s)$ and $(V',\vec E'_s)$ for all $s\in\S$ and $\vec\psi=\vec \psi'\circ\p$. That is, the two embedded shards are the same up to relabeling and time shift. We then say that two Toom contours are \emph{equivalent} if there is a bijection between their respective embedded shards such that each shard and its image are equivalent and the first contour's embedded root shard maps to the second one's. We will call the equivalence classes defined by this relation \textit{shattered contours}. See Fig.~\ref{fig:shards} for an example of the embedding of the shards of two equivalent Toom contours. We say that a shattered contour is \emph{rooted} at $o$, if the embedding of its root $v_\circ$ satisfies $\vec\psi(v_\circ)=o$.
\begin{defi}[Presence of a shattered contour]
	\label{def:shattered:present}
	A shattered contour is \emph{present} in $X_0$, if at least one Toom contour in the equivalence class is present in $X_0$.
\end{defi}

Putting our observations together, we obtain the following corollary of Theorem~\ref{T:contour}.
\begin{cor}[Presence of a shattered contour]
	\label{cor:shattered}
	If $o\in [X]$, a shattered contour rooted at $o$ is present in $X_0$.
\end{cor}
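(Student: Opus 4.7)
The plan is to unwind the definitions and invoke Theorem~\ref{T:contour} essentially as a black box. First, since $o\in[X]=\bigcup_{t\ge 0}X_t$, I can choose some $t\ge 0$ with $o\in X_t$. Then Theorem~\ref{T:contour} directly produces a Toom contour $(V,\cE,v_\circ,\psi)$ rooted at the space-time point $(o,t)$ that is present in $X_0$. Because the root is embedded at $(o,t)$, its spatial coordinate satisfies $\vec\psi(v_\circ)=o$.

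Next I would observe that this Toom contour determines an equivalence class under the shard-level equivalence defined just before the corollary, and this equivalence class is precisely what is called a shattered contour. Since equivalence acts by relabeling and time shift on each shard, and the relation on full contours preserves which shard is the root shard and the spatial position of the root, the resulting shattered contour is naturally rooted at $o$ in the sense that $\vec\psi(v_\circ)=o$ for any (equivalently, some) representative.

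Finally, presence of a shattered contour in $X_0$ is defined to mean that at least one representative of the equivalence class is present in $X_0$ (Definition~\ref{def:shattered:present}). Our constructed Toom contour is itself such a representative and is present, so the shattered contour rooted at $o$ is present in $X_0$, which is the claim.

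There is no real obstacle here: the statement is a reformulation of Theorem~\ref{T:contour} through the shattering equivalence relation. The only minor point worth being careful about is checking that the notion of ``rooted at $o$'' for a shattered contour is well-defined independently of the chosen representative, which follows from the fact that the equivalence relation between full contours requires the embedded root shard of one to map to the embedded root shard of the other, and the time-shift freedom in the shard equivalence does not affect the spatial coordinate $\vec\psi(v_\circ)$ of the root.
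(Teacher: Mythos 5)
Your proposal is correct and matches the paper's (implicit) argument: the corollary is obtained by taking $t$ with $o\in X_t$, invoking Theorem~\ref{T:contour} to get a present Toom contour rooted at $(o,t)$, and passing to its equivalence class, which by Definition~\ref{def:shattered:present} is a present shattered contour rooted at $o$. Your extra remark that ``rooted at $o$'' is representative-independent is a fine (and harmless) precision.
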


\begin{figure}[htb!]
	\begin{center}
		\includegraphics[width=10cm]{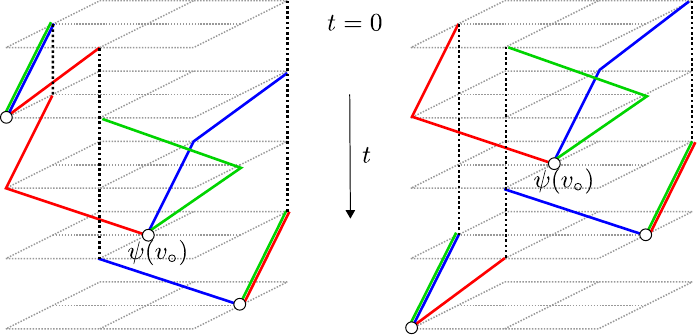}
		\caption{Space-time embedding of the set of shards of two Toom contours with three charges belonging to the same shattered contour as the one in Fig.~\ref{fig:toomcontour}. Empty dots denote the roots.}
		\label{fig:shards}
	\end{center}
\end{figure}

Note that by the definition of the equivalence relation and by conditions \ref{i}-\ref{xii} each shattered contour rooted at $o$ that is present in $X_0$ identifies with the space embedding of a connected set of shards, one of which is rooted at $o$, such that
\begin{enumerate}[label=(\roman*)',ref=(\roman*)']
	\item\label{i'} exactly one charge of each type arrives at the endpoints with identical $\vec\psi$ image,
	\item\label{ii'} $\vec\psi$ does not map together endpoints with other points,
	\item\label{iii'} each charge $s$ edge starting at an internal vertex or the root is a type $s$ diagonal segment,
	\item\label{iv'} every other edge is a diagonal segment with arbitrary type,
	\item\label{vi'} all non-root shards' sources are forks.
\end{enumerate}

\section{Peierls bounds}
We are now ready to apply a Peierls argument as in \cite{Swart22}, taking into account Section~\ref{subsec:shattered}. Let $\cT_{n,m}$ with $m, n\geq 0$ denote the set of shattered contours rooted at $o$ with $m+1$ shards and $n$ directed edges in their shards that start at an internal vertex or the root. Apart from these $n$ edges, there is exactly one edge of each charge starting at each of the $m$ sources other than the root, hence there is a total of $n+\s m$ edges in the shards of the shattered contours of $\cT_{n,m}$. Corollary~\ref{cor:shattered} provides the following starting point:
\be\label{Pei}
\P_p(o\in[X])
\leq \sum_{n=0}^\infty\sum_{m=0}^\infty\sum_{T\in\cT_{n,m}}\bbP_p\left(T\text{ is present}\right).
\ee
The following result is \cite[Lemma 12]{Swart22} that was first stated in \cite[Lemma 1]{Toom80}; we include the proof for completeness.

\bl[Zero sum property]
\label{L:zerosum}Recall the functions $L_s$ from Eq.~\eqref{eq:def:L}. If $(V,\Ei,v_\circ,\psi)$ is a Toom contour, then
\be\label{zerosum}
\sum_{s\in\S}\sum_{(v,w)\in\vec E_s}\left(L_s\left(\vec\psi(w)\right)-L_s\left(\vec\psi(v)\right)\right)=0.
\ee
\el
\bpro
We can rewrite the l.h.s.\ of Eq.~\eqref{zerosum} as
\be
\sum_{v\in V}\left\{\sum_{s\in\S}\sum_{(u,v)\in\vec E_{s,{\rm in}}(v)}L_s\left(\vec\psi(v)\right)
-\sum_{s\in\S}\sum_{(v,w)\in\vec E_{s,{\rm out}}(v)}L_s\left(\vec\psi(v)\right)\right\}.
\ee
At internal vertices, the term inside the brackets is zero because the number of incoming edges of each charge equals the number of outgoing edges of that charge. At the sources and sinks, the term inside the brackets is zero by Eq.~\eqref{eq:polar}, since there is precisely one outgoing (resp.\ incoming) edge of each charge.
\epro

The zero sum property is used in \cite[Lemma 13]{Swart22} and \cite[Lemma 1]{Toom80} to bound the total number of edges in a Toom contour. Since in our setting vertical segments give no contribution to Eq.~\eqref{zerosum}, we can instead bound the number of diagonal segments in terms of the number of sinks, or, equivalently, the number of edges in the shards of the corresponding shattered contour in terms of the number of shards.

\bl[Bound on the number of edges]
Let\label{L:edgebnd} $\eps$ and $R$ be as in~Eq.~\eqref{eq:def:R:eps}. Then each $T\in\cT_{n,m}$ satisfies $n\le Rm/\eps$.
\el

\begin{proof}
	By the linearity we have $L_s\left(\vec\psi(w)\right)-L_s\left(\vec\psi(v)\right)=L_s\left(\vec\psi(w)-\vec\psi(v)\right)$. Lemma~\ref{L:zerosum} and Eq.~\eqref{eq:BBPS:DTBP} and conditions~\ref{x} and \ref{xi} imply that
	\begin{align*}
	0 ={}&\sum_{s\in\S} \sum_{(v,w)\in \vec E_s}
	\left(L_s\left(\vec\psi(w)\right)-L_s\left(\vec\psi(v)\right)\right)\\
	={}&\sum_{s\in\S}\sum_{(v,w)\in  \vec E_s\setminus \vec E^\circ}
	\left(L_s\left(\vec\psi(w)\right)-L_s\left(\vec\psi(v)\right)\right)+\sum_{s\in\S}\sum_{(v,w)\in\vec E^\circ_s}
	\left(L_s\left(\vec\psi(w)\right)-L_s\left(\vec\psi(v)\right)\right)\\
	\geq{}&\eps n-Rm.\qedhere\end{align*}
\end{proof}

By condition \ref{i'} $\vec\psi$ maps the endpoints of the shards of any present $T\in\cT_{n,m}$ to $m+1$ disjoint sites in $X_0$. By Lemma~\ref{L:edgebnd}, we can then bound the sum in the r.h.s.\ of Eq.~\eqref{Pei} from above by
\begin{equation}
\label{Peierls}
\sum_{m=0}^\infty\sum_{n=0}^{Rm/\e}\sum_{T\in\cT_{n,m}}\P_p(T\text{ is present})
\le\sum_{m=0}^\infty\sum_{n=0}^{Rm/\e}\sum_{T\in\cT_{n,m}}p^{m+1}\le\sum_{m=0}^\infty p^m\sum_{n=0}^{Rm/\e} |\cT_{n,m}|.
\end{equation}
It then remains to bound the number of contours. As we have a more precise notion of contour, we count them differently from \cite[Lemma 13]{Swart22} or \cite[Lemma 3]{Toom80}, which allows for better bounds.

\bl[Exponential bound]
\label{lem:counting}
Recall $A=\bigcup_{s\in\S}A_s$. As $m\to\infty$
\begin{equation}
\label{eq:counting}
|\cT_{n,m}|\le \left(\max_{s\in\S}|A_s|^{R/\e }(2^{\s-1}-1)|A|(|A|-1)\frac{(R/\e+\s)^{R/\e+\s}}{\s^\s(R/\e)^{R/\e}}\right)^{m+o(m)}.
\end{equation}
\el
Before proving Lemma~\ref{lem:counting}, let us conclude the proof of our main results Theorems~\ref{T:bootstrap}, \ref{th:DTBP} and \ref{th:Toom}. 
\begin{proof}[Proof of Theorem~\ref{T:bootstrap}]
	By Lemma~\ref{lem:counting} the r.h.s.\ of Eq.~\eqref{Peierls} is finite for \begin{equation}
	\label{eq:pc:bound}
	p<\left(\max_{s\in\S}|A_s|^{R/\e }(2^{\s-1}-1)|A|(|A|-1)\frac{(R/\e+\s)^{R/\e+\s}}{\s^\s(R/\e)^{R/\e}}\right)^{-1}.
	\end{equation}
	By the Borel--Cantelli lemma, a.s.\ finitely many such shattered contours are present. Therefore, for $M$ large enough there is a positive probability that only contours with $m<M$ are present. But then, this event still occurs even if we remove from $X_0$ all sites at sufficiently large distance from the origin. Since this can decrease the probability that a shattered contour is present by at most some finite factor, we recover $\bbP_p(o\not\in[X_0])>0$. Hence, $\pc(\cU)$ is at least the r.h.s.\ of Eq.~\eqref{eq:pc:bound}, which is strictly positive.
\end{proof}

\begin{proof}[Proof of Theorem~\ref{th:DTBP}]
	Recall from~Eqs.~\eqref{eq:DTBPconstants} and \eqref{eq:DTBPsets} that for DTBP we have $\s=3$, $|A|=3$, $\e=1$, $R=6$ and $|A_s|=2$ for all $s\in\S$. Thus, the bound from Eq.~\eqref{eq:pc:bound} becomes 
	\[\pc(\cU^{\mathrm{DTBP}})\ge \left(2^{6}(2^{2}-1)3\cdot(3-1)\frac{(6+3)^{6+3}}{3^3\cdot 6^{6}}\right)^{-1}=\frac{1}{2\cdot3^{11}}> 2.8\cdot 10^{-6}.\qedhere\]
\end{proof}
\begin{proof}[Proof of Theorem~\ref{th:Toom}]
	By \cite[Proposition 3.1]{Hartarsky22sharpness} $\pc\left(\phi^{\mathrm{NEC}}\right)=\pc(\cU)$ for
	\[\cU:=\left\{\left\{(0,0,-1),(1,0,-1)\right\},\left\{(0,1,-1),(0,0,-1)\right\},\left\{(1,0,-1),(0,1,-1)\right\}\right\}.\]
	Upon applying an injective linear endomorphism of $\bbZ^3$, this is the same as
	\begin{equation}
	\label{eq:def:U'}\cU':=\left\{U\times\{-1\}\colon U\in\cU^{\mathrm{DTBP}}\right\}.\end{equation}
	Thus, we obtain the lower bound of Theorem~\ref{th:Toom} like Theorem~\ref{th:DTBP}, appending $-1$ in Eq.~\eqref{eq:DTBPsets} to all sites in $A_1,A_2,A_3$, appending $0$ to $u_1,u_2,u_3$ and changing nothing else.
\end{proof}
\begin{rem}
	At the price of degrading Theorems~\ref{th:DTBP} and \ref{th:Toom} to about $10^{-7}$, we could have used the simpler bound $|\cT_{n,m}|\le (2|A|)^{n+\s (m+1)}$, whose proof is left to the reader, instead of Lemma~\ref{lem:counting}. Inversely, examining \cite{Swart22} carefully, we may further improve the notion of fork to obtain $4.2\cdot 10^{-6}$, but this is hardly worth the effort. It is likely that one can make other minor improvements, but reaching, say, $10^{-3}$ with the present method seems hard.
\end{rem}

\begin{proof}[Proof of Lemma~\ref{lem:counting}]
	Recall that counting $\cT_{n,m}$ is equivalent to counting the space embeddings of $m+1$ connected shards with $n+\s m$ edges, one of which is rooted at $o$, and such that they satisfy conditions \ref{i'}-\ref{vi'}. Therefore, we may encode a shattered contour $T\in\cT_{n,m}$ in the following way. First, we supply a sequence of $m$ entries on the alphabet of all possible forks up to translation to specify the direction of the $\s$ edges from the sources (other than the root) subject to conditions \ref{iv'} and \ref{vi'}. Then we give a sequence of $n$ entries on an alphabet of $\max_{s\in\S} |A_s|$ elements called \emph{increments}, which specifies the direction of the segments corresponding to the $\s$ edges of the root and the edges starting at internal vertices, which by condition \ref{iii'} are elements of $A_1,\dots,A_\s$. Finally, we need $\s (m+1)-1$ separators to be inserted in the increment sequence.
	
	Given $T$ rooted at $o$, we determine this encoding as follows. We will process shards one by one, starting from the root one. In the case of the root shard, we explore the path of charge 1 from $v_\circ$ in the shard and register the increments $\vec\psi(w)-\vec\psi(v)\in A_1$ for edges $(v,w)$ in this path. To this purpose we have fixed an injective mapping from $A_1$ to the increment alphabet. Once we reach the endpoint of the path, we place a separator and repeat the same with the other $\s-1$ paths until the shard is exhausted. Up to this point we have registered $\s$ separators.
	
	During the entire process we keep track of a list of couples composed of the space coordinate $i$ of an endpoint and a charge $s\in\S$ in the following way. By condition \ref{i'}, the set of space coordinates of the endpoints contains $m+1$ distinct sites. As soon as we place a separator, we have either just discovered a new site in this set or we have rediscovered one. In the first case, we add to our list $\s-1$ couples corresponding to the space coordinate we discovered and the remaining charges (other than the one we used when discovering it). In the second case, we find the entry corresponding to the space coordinate and charge we used when rediscovering it and delete it from the list.
	
	In order to choose the second shard (and all the remaining ones), when the previous one is completely encoded, we read the first couple $(i, s)$ from the list. The next shard to encode is the one whose $s$-charge path ends at $i$. Once we know this, we register the source type of this shard, which is a fork by condition \ref{vi'}. We then explore its $\s$ paths exactly like we did for the root shard. When we reach an endpoint, we place a separator and either add $\s-1$ couples to our list or remove one as before.
	
	As $T$ consists of a connected sets of shards, this procedure ends when we have indeed encoded the entire shattered contour. It is clear from the construction that, given the encoding, we can reconstruct the space embedding of the shards, and thus the shattered contour. Indeed, we have ensured that we always now which charge of which shard we are reading, so that we can read off the corresponding increment from the encoding. Moreover, when we discover a new shard, we always know to which already discovered endpoint it should be connected in the space embedding and by which charge.
	
	It then remains to bound the number of possible encodings. By Lemma~\ref{L:edgebnd}, there are $\max_{s\in\S}|A_s|^n\leq \max_{s\in\S}|A_s|^{Rm/\e}$ choices for the increment sequence. By \ref{iv'} the size of the alphabet for forks is given by $(2^\s-2)|A|(|A|-1)/2$. Finally, the number of different ways in which we can insert $\s(m+1)-1$ separators into $n$ increments is
	\[\binom{n+\s(m+1)-1}{\s(m+1)-1}\le\binom{m(R/\e+\s)+\s-1}{m\s +\s-1}=\left(\frac{(R/\e+\s)^{R/\e+\s}}{\s^\s(R/\e)^{R/\e}}\right)^{m+o(m)}\] by Lemma~\ref{L:edgebnd}, as $m\to\infty$. Putting these together, we obtain Eq.~\eqref{eq:counting} as desired.
\end{proof}

\appendix
\section{Upper bound for the Toom rule}
In this appendix we sketch the proof of the upper bound in Remark~\ref{rem:Toom}, using \cite{Hartarsky21,Hartarsky22sharpness}. Recall $\mathcal U'$ from \eqref{eq:def:U'}, as well as the fact that $\pc\left(\phi^{\mathrm{NEC}}\right)=\pc(\cU')$ from the proof of Theorem \ref{th:Toom}. Therefore, it suffices to prove that $\pc(\mathcal U')<0.2452$, for which we closely follow \cite{Hartarsky21}. 

Fix $U'\in\mathcal U'$. By the correspondence of \cite[Proposition 3.1]{Hartarsky22sharpness} the bootstrap percolation model with update family $\{U'\}$ is equivalent to a standard two-dimensional oriented site percolation (for sites in state 1), but embedded in the plane $\langle U'\rangle$ generated by $U'$ in three-dimensional space. 

Now also fix a direction $u'\in S^2$ and consider the same percolation model, but restricted to $\bbZ^3\setminus\mathbb H_{u'}$. Observe that $\langle U'\rangle\setminus \mathbb H_{u'}$ is either $\langle U'\rangle$ or a half-plane thereof, depending on whether $u'\perp U'$. Denote by $o\to[A]\infty$ the event that in the model restricted to some $A\subset\bbZ^3$ the oriented percolation cluster of the origin (of sites in state 1) is infinite and define the critical parameter of this percolation
\[d_{u'}(\{ U'\})=\sup\left\{p\in[0,1]:\bbP_p\left(o\to[\langle U'\rangle\setminus \bbH_{u'}]\infty\right)>0\right\},\]
which we call the \emph{critical density}. Note that the critical density only depends on $u'$ via $\langle U'\rangle\setminus\mathbb H_{u'}$, which is constant along all open semicircles of $S^2$ with endpoints $U'^\perp$ (the meridians, if $U'^\perp\cap S^2$ are the poles). Therefore, it will suffice to consider $u'\in\bbR^2\times \{0\}$. But for such $u'$ we clearly have $d_{u'}(\{U'\})=d_{u'}(\{U\times\{0\}\})$, where $U$ is the update rule of DTBP such that $U'=U\times\{-1\}$.

It was proved in \cite[Sections 5, 6.1]{Hartarsky21} that 
\[\max_{u\in S^1} \min_{U\in\mathcal U^{\mathrm{DTBP}}}d_u(\{U\})<0.2452,\]
so that
\begin{equation}
\label{eq:app:lower}
\sup_{u'\in S^1\times\{0\}}\min_{U' \in\mathcal U'}d_{u'}(\{U'\})<0.2452.\end{equation}
Furthermore, for $u\in S^1$, $U\in\cU^{\mathrm{DTBP}}$ and $p>d_u(\{U\})$ more is known (see \cite[Section 5]{Hartarsky21}) about the oriented percolation cluster (of sites in state 1) starting at $o$. Namely, if we consider a wedge-shaped region $W=\bbH_{u+\varepsilon}\cap\bbH_{u-\varepsilon}$ with $\varepsilon=\varepsilon(p)$ small enough, then the radius of $\{x\in\bbZ^2\setminus W:o\to[\bbZ^2\setminus W]x\}$ has an exponentially decaying tail. This translates to an analogous result for $u'=u\times\{0\}$, $U'=U\times\{-1\}$ and the set $\{x\in \langle U'\rangle\setminus C_{u'},o\to[\langle U'\rangle\setminus C_{u'}]x\}$, where $C_{u'}=\bigcap_{v\in S^2:\|v-u'\|\le \varepsilon} \bbH_{v}$ is a cone-shaped region approximating $\bbH_{u'}$. Since $U'$ is fully contained in the lower half-space each oriented step we can take from the origin points towards the lower half-space, thus we may further truncate $C_{u'}$ to $C'_{u'}=C_{u'}\cap\mathbb H_{(0,0,1)}$.

Translating this into bootstrap percolation language, for $p> d_u(\{U\})=d_{u'}(\{U'\})$ the time at which the state of $o$ becomes 0 has an exponential tail for $\{U'\}$-bootstrap percolation with 0 boundary condition in $C'_{u'}$. Thus, the same holds for $\cU'$-bootstrap percolation. Taking advantage of  \eqref{eq:app:lower} for each $u'\in S^1\times\{0\}$ we can choose a set $U'\in\cU'$ such that $d_{u'}(\{U'\})<0.2452$. Hence, exponential decay holds for all $u'\in S^1\times \{0\}$ and $p\ge 0.2452$.

We next consider an elongated finite right circular cone with its vertex pointing in direction $(0,0,-1)$:
\[C=\left\{(x,y,z)\in\bbZ^3:K\sqrt{x^2+y^2}\le z+K^2,z\in[-K^2,0)\right\},\]
where $K>0$ is large enough. If $C$ is entirely in state state 0, it has probability exponentially close to 1 to extend further upwards. Namely, we claim that the sites $(x,y,0)\in\bbZ^2$ such that $x^2+y^2\le K^2$ are likely to turn to state 0 in $\cU'$-bootstrap percolation. Indeed, far from the boundary of this circle this is automatic, since the direction $(0,0,1)$ is not stable (as defined in Section~\ref{S:bootstrap}). Moreover, even at the site $Ku'$ at the boundary, up to distance of order $\varepsilon K$ from it, we see more sites in state 0 than those in $K u'+C'_{u'}$. Therefore, we do a union bound with the exponential decay estimates, in order to obtain that with probability exponentially close to $1$ in $\varepsilon K$ the entire circle becomes state 0. Repeating the same procedure for the slightly taller cone we just obtained and noting that the exponential bounds are summable, we get that the cone has positive probability to grow indefinitely. By standard renormalisation arguments this yields that $\pc(\mathcal U')<0.2452$, as desired. We refer the reader to \cite[Section 4]{Hartarsky21} for more details, but let us remark that the existence of a $K$ such that $C$ contains the relevant parts of the truncated cones uniformly in the choice of $u$ follows from the continuity of critical densities of oriented percolation models (see \cite[Remark 4.4 and Lemma 5.1]{Hartarsky21}). 

\section*{Acknowledgments}
We thank Cristina Toninelli for helpful and stimulating discussions. We thank Rob Morris for information regarding \cite{Balister22}.

\let\d\oldd
\let\k\oldk
\let\l\oldl
\let\L\oldL
\let\o\oldo
\let\O\oldO
\let\r\oldr
\let\S\oldS
\let\t\oldt
\let\u\oldu

\bibliographystyle{plain}
\bibliography{Bib}
\end{document}